\newtheorem{theorem}{Theorem}[section]
\newtheorem{lemma}[theorem]{Lemma}
\newtheorem{corollary}[theorem]{Corollary}
\newtheorem{claim}[theorem]{Claim}
\newtheorem{remark}[theorem]{Remark}
\date{}
\begin{document}

\date{}
%%%%%%%%%%%%%%%%%%%%%%%%%%%%%%%%%%%%%%%%%%%%%%%%%%%%%%%%%%%%%%%%%%%%%%%%%%%%%%%%%%%%%%%%%%%%%%%%%%%%%%%%%%%%%%%%%%%%%%%%%%%%%%%%%%%%%%%%%%%%%%%%%%

\title{Coloring of the square of Kneser graph $K(2k+r,k)$}
% (As of \today)}

\date{December 10, 2014}
\author{
\begin{tabular}{c}
{\sc Seog-Jin KIM\thanks{This paper was supported by Konkuk University in 2014}} \\
[1ex]
{\small
Department of Mathematics Education, Konkuk University, Korea} \\ \vspace{0,3cm}
{\small
{\it E-mail address}: {\tt skim12@konkuk.ac.kr}} \\ 
{\sc Boram PARK\thanks{Corresponding author: borampark@ajou.ac.kr}
} \\
[1ex]
{\small Department of Mathematics, Ajou University, Korea}\\
{\small
{\it E-mail address}: {\tt borampark@ajou.ac.kr}} \\
\end{tabular}
}

%%%%%%%%%%%%%%%%%%%%%%%%%%%%%%%%%%%%%%%%%%%%%%%%%%%%%%%%%%%%%%%%%%%%%%%%%

\maketitle
%%%%%%%%%%%%%%%%%%%%%%%%%%%%%%%%%%%%%%%%%%%%%%%%%%%%%%%%%%%%%%%%%%%%%%%%%%%%%%%

\begin{abstract}
The Kneser graph $K(n,k)$ is the graph whose vertices are the $k$-element subsets of an $n$ elements set, with two vertices adjacent if they are disjoint. The square $G^2$ of  a graph $G$ is the graph defined on $V(G)$ such that two vertices $u$ and $v$ are adjacent in $G^2$ if the distance between $u$ and $v$ in $G$ is at most 2.  Determining the chromatic number of the square of the Kneser graph $K(n, k)$ is an interesting graph coloring problem, and is also related with intersecting family problem. The square of $K(2k, k)$ is a perfect matching and the square of $K(n, k)$ is the complete graph when $n \geq 3k-1$.  Hence coloring of the square of $K(2k +1, k)$ has been studied as the first nontrivial case. In this paper, we focus on the question of determining $\chi(K^2(2k+r,k))$ for $r \geq 2$.

Recently, Kim and Park \cite{KP2014}  showed that $\chi(K^2(2k+1,k)) \leq 2k+2$ if $ 2k +1 = 2^t -1$ for some  positive integer $t$.
In this paper, we generalize the result by showing that for any integer $r$ with $1 \leq r \leq k -2$, \\
(a) $\chi(K^2 (2k+r, k)) \leq (2k+r)^r$, if $2k + r = 2^t$  for some integer $t$, and  \\
(b) $\chi(K^2 (2k+r, k)) \leq (2k+r+1)^r$, if $2k + r = 2^t-1$  for some integer $t$.

On the other hand, it was showed in \cite{KP2014} that $\chi(K^2 (2k+r, k)) \leq  (r+2)(3k + \frac{3r+3}{2})^r$ for $2 \leq r \leq k-2$. We improve these  bounds by showing that for  any integer $r$ with $2 \leq r \leq k -2$, we have $\chi(K^2 (2k+r, k)) \leq2 \left(\frac{9}{4}k + \frac{9(r+3)}{8} \right)^r$. Our approach is also related with injective coloring and coloring of Johnson graph.

\end{abstract}

\noindent
{\bf Keywords:} Kneser graph, chromatic number, square of graph, injective coloring

\section{Introduction and Main result}

For a finite set $X$, let ${X \choose k}$ be the set of all $k$-element subsets of $X$.
For $n \geq 2k$, for a finite set $X$ with $n$ elements,
the {\em Kneser graph} $K(n,k)$ is the graph whose vertex set is
${X \choose k}$ and two vertices $A$ and $B$ are adjacent if and only if $A\cap B =\emptyset$.
Kneser graphs have many interesting properties and have been the subject of many researches. The problem of computing the chromatic number of a Kneser graph was conjectured by Kneser  and proved by Lov\'{a}sz~\cite{1978L} that $\chi(K(n,k))=n-2k+2$.
%Also, several types of colorings of Kneser graphs have been considered.

For a simple graph $G$, the {\it square} $G^2$ of $G$ is defined such that $V(G^2) = V(G)$ and two vertices $x$ and $y$ are adjacent in $G^2$ if and only if the distance between $x$ and $y$ in $G$ is at most 2.
We denote  the square of the Kneser graph $K(n,k)$ by $K^2(n,k)$ .
Note that $A$ and $B$ are adjacent in $K^2(n,k)$ if and only if $A\cap B=\emptyset$ or $|A\cap B|\ge 3k-n$.  Therefore, $K^2(n,k)$ is a complete graph
if $n\ge 3k-1$, and  $K^2(n,k)$ is a perfect matching if $n=2k$.
But for  $2k+1\le n \le 3k-2$, the exact value of $\chi(K^2(n,k))$ is not known.  Thus
coloring of the square of $K(2k +1, k)$ has been studied as the first nontrivial case.
%Hence it is an interesting problem to determine the chromatic number of the %square of the Kneser graph $K(2k+1, k)$ as the first nontrivial case.
Kim and Nakprasit~\cite{2004KN} showed that
 $\chi(K^2(2k+1,k))\le 4k+2$ for $k \geq 2$.
Later, Chen, Lih, and Wu~\cite{2009CLW} improved the bound as $\chi(K^2(2k+1,k))\le 3k+2$ for $k\ge 3$.
Recently, Kim and Park \cite{KP2014} showed that $\chi(K^2 (2k+1, k)) \leq \frac{8}{3}k +\frac{20}{3}$ for $k \geq 2$.

In this paper, we focus on the question of determining $\chi(K^2(2k+r,k))$ for $r \geq 2$.
Note that $K^2 (2k+r, k)$ is a ${{k+r} \choose r}^2$-regular graph.
Hence we have the trivial upper bounds, $\chi(K^2 (2k+r, k)) \leq {{k+r} \choose r}^2$ for $k \geq 3$.
On the other hand, we can show that ${k + 2r \choose r} \leq \omega(K^2 (2k+r, k))$.
In fact, let $S$ be the set of all $k$-subsets of $\{1, 2, \ldots, 2k+r \}$ which contains $\{1, 2, \ldots, k-r \}$, and then  $|A \cap B| \ge k-r$ for all $A, B$ in $S$.
Thus the subgraph of $K^2(2k+r, k)$ induced by $S$
is a clique of size ${{k+2r} \choose r}$.  Hence ${k + 2r \choose r} \leq \chi (K^2 (2k+r, k))$.
Thus  trivial bounds on $\chi(K^2 (2k+r, k))$ are ${k + 2r \choose r} \le \chi(K^2 (2k+r, k)) \leq {{k+r} \choose r}^2$.
%We have tried to find the exact value of $\omega(K^2 (2k+r, k))$, but
Note that finding the exact value of $\omega(K^2 (2k+r, k))$ is related with very difficut intersecting family problems.  Hence it would be an interesting problem to decide the exact value of $\omega(K^2 (2k+r, k))$.

It is a well-known  that
$\big( \frac{m}{t} \big) ^t \leq {m \choose t} \leq \big(\frac{m \cdot e}{t} \big)^t$ for $1 \leq t \leq m$.  Thus for $2 \leq r \leq k-2$,
the trivial bounds on $\chi(K^2 (2k+r, k))$ can be stated as
$ a_r k^r \leq \chi(K^2 (2k+r, k)) \leq b_r k^{2r}$ for some constant $a_r$ and $b_r$.
Recently, Kim and Park \cite{KP2014} showed that
$ c_r k^r \leq \chi(K^2 (2k+r, k)) \leq d_r k^{r}$ for some constants $c_r$ and $d_r$ for $2 \leq r \leq k-2$.
More precisely, they showed that $\chi(K^2 (2k+r, k)) \leq  (r+2)(3k + \frac{3r+3}{2})^r$ for $1 \leq r \leq k-2$.  In this paper, we improve these bounds as follows.

\begin{theorem}\label{main-result-general}
For  any integer $r$ with $1 \leq r \leq k -2$,
\[\chi(K^2 (2k+r, k)) \leq2 \left(\frac{9}{4}k + \frac{9(r+3)}{8} \right)^r.\]
\end{theorem}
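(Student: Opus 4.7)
Our plan is to split the problem into two pieces, as the abstract's allusion to injective coloring and coloring of Johnson graphs suggests. Concretely, we aim to show
\[\chi(K^2(2k+r,k)) \;\le\; 2\,\chi(J^r(2k+r,k)) \;\le\; 2\left(\tfrac{9}{8}(2k+r+3)\right)^r,\]
where $J^r(n,k)$ denotes the graph on $\binom{[n]}{k}$ with $A \sim B$ iff $1 \le k - |A \cap B| \le r$.

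The first inequality is a ``factor of $2$'' reduction. For $1 \le r \le k-2$, the edges of $K^2(2k+r,k)$ are exactly the pairs with $|A\cap B|=0$ or $|A\cap B|\ge k-r$, the latter being precisely the edges of $J^r(2k+r,k)$. Starting from an optimal proper coloring of $J^r(2k+r,k)$, within any color class $\mathcal{C}$ the only unresolved $K^2$-conflicts are disjoint pairs. We claim that the ``disjointness graph'' on $\mathcal{C}$ has maximum degree at most $1$: if $A\in\mathcal{C}$ were disjoint from distinct $B_1,B_2\in\mathcal{C}$, then $B_1,B_2\subseteq[2k+r]\setminus A$, a $(k+r)$-set, and so $|B_1\cap B_2|\ge 2k-(k+r)=k-r$, which would make $B_1,B_2$ adjacent in $J^r$ and contradict $\mathcal{C}$ being a color class. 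A disjoint union of an edge-matching and isolated vertices is $2$-colorable, and refining each $J^r$-color class this way gives the factor $2$.

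For the second inequality we would build a coloring of $J^r(n,k)$ as a product of $r$ single-coordinate colorings $c_1,\ldots,c_r : \binom{[n]}{k}\to[q]$, with $q=\lceil\tfrac{9}{8}(n+3)\rceil$ and $n=2k+r$. The natural template is to fix $r$ balanced partitions (or weightings) $\pi_1,\ldots,\pi_r$ of $[n]$ into $q$ blocks of near-equal size and to let $c_i(A)$ depend only on the trace of $A$ under $\pi_i$ (for instance $\sum_{x\in A}\pi_i(x)\bmod q$). A pigeonhole argument exploiting the balance of the partitions should then show that every nontrivial symmetric difference of size at most $2r$ is detected by at least one of the $c_i$, so $(c_1,\ldots,c_r)$ is a proper coloring of $J^r(n,k)$ with at most $q^r$ colors.

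The main obstacle we anticipate is this product construction. A single random hash already requires $q=\Omega(n)$ to separate a fixed pair, so the tight constant $\tfrac{9}{8}$ must come from a careful optimization of partition block sizes against the number of ``bad'' pairs that can collide in every coordinate. Engineering the $r$ partitions so that they collectively distinguish every short symmetric difference, while keeping the per-coordinate palette at $\tfrac{9}{8}n + O(r)$, is where we expect the essential technical work to lie.
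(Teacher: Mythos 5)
Your overall architecture is right, and your first inequality is correct: the observation that within a color class of $J^r(2k+r,k)$ the only remaining $K^2$-conflicts form a graph of maximum degree $1$ (because two sets disjoint from a common $k$-set $A$ must meet in at least $k-r$ elements) is exactly the content of the reduction the paper uses, namely the Kim--Oum inequality $\chi(G^2)\le 2\chi_i(G)$ together with the identification $\chi_i(K(2k+r,k))=\chi(J^r(2k+r,k))$. That half is sound.

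The genuine gap is the second inequality, which is the heart of the theorem and which you explicitly leave as ``where we expect the essential technical work to lie.'' Your proposed mechanism --- balanced partitions $\pi_1,\ldots,\pi_r$ and a pigeonhole argument --- will not close it: a counting or averaging argument can show that a random choice separates most pairs, but here you need a single explicit $(c_1,\ldots,c_r)$ that separates \emph{every} pair with $1\le k-|A\cap B|\le r$, and no amount of balancing block sizes forces that. Moreover, the constant $\tfrac98$ does not come from optimizing partitions; it comes from a Bertrand-type prime gap theorem guaranteeing a prime $p$ with $2k+r\le p\le \tfrac{9}{8}(2k+r+3)$. The paper's actual construction takes the ground set inside $\mathbb{F}_p$ and colors $A\mapsto(e_1(A),\ldots,e_r(A))$, the first $r$ elementary symmetric functions. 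If $f(A)=f(B)$ and $|A\cap B|=k-q$ with $1\le q\le r$, an induction using $e_i(A\cup C)=\sum_s e_s(A)e_{i-s}(C)$ shows $e_i(A\setminus B)=e_i(B\setminus A)$ for all $i\le q$, whence the monic degree-$q$ polynomial $\prod_{a\in A\setminus B}(x-a)$ has the same coefficients as $\prod_{b\in B\setminus A}(x-b)$ and so vanishes at some $b\in B\setminus A$, forcing $A\setminus B$ and $B\setminus A$ to intersect --- a contradiction. Your linear-hash template $c_i(A)=\sum_{x\in A}\pi_i(x)\bmod q$ could be salvaged only by the very specific choice $\pi_i(x)=x^i$ over $\mathbb{F}_p$ (power sums, which determine the $e_i$ via Newton's identities for $p>r$); as stated, with generic ``balanced'' weightings, the construction fails and the proof is incomplete.
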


For special case, better upper bounds were known.
Kim and Park \cite{KP2014} showed that  $\chi(K^2 (2k+1, k)) \leq 2k+2$ if  $2k + 1 = 2^t -1$  for some integer $t$.
In this paper, we generalize this result as follows.

\begin{theorem}\label{main-result-special}
For any integer $r$ with $1 \leq r \leq k -2$,  we have \\
(a) $\chi(K^2 (2k+r, k)) \leq (2k+r)^r$, if $2k + r = 2^t$  for some integer $t$, \\
(b) $\chi(K^2 (2k+r, k)) \leq (2k+r+1)^r$, if $2k + r = 2^t-1$  for some integer $t$.
\end{theorem}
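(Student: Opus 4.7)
The plan is to construct an explicit coloring via power sums over $\mathbb{F}_{2^t}$, in the style of a BCH bound. Set $q=2^t$ and identify $[n]=[2k+r]$ with $\mathbb{F}_q$ in case~(a), where $n=q$, and with $\mathbb{F}_q^{\ast}$ in case~(b), where $n=q-1$. With $p_j(S):=\sum_{s\in S}s^j$, define
\[
c:\binom{[n]}{k}\to\mathbb{F}_q^r,\qquad c(A)=\bigl(p_1(A),\,p_3(A),\,\ldots,\,p_{2r-1}(A)\bigr).
\]
This uses $q^r$ colors, which matches $(2k+r)^r$ in case~(a) and $(2k+r+1)^r$ in case~(b); it thus suffices to show that $c$ is a proper coloring of $K^2(2k+r,k)$.

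The argument rests on two observations. First, because we are in characteristic~$2$, $c$ is additive on symmetric differences, so $c(A)=c(B)$ is equivalent to $p_{2i-1}(T)=0$ for $i=1,\ldots,r$, where $T:=A\triangle B$. Second, the Frobenius identity $p_{2m}(S)=p_m(S)^2$ promotes these $r$ conditions to $p_m(T)=0$ for every $m\in\{1,\ldots,2r\}$. These vanishings are then fed into the following Vandermonde lemma, which I would prove separately: if $U\subseteq\mathbb{F}_q$ and $p_j(U)=0$ for $j=1,\ldots,|U|$, then $U\subseteq\{0\}$. (When $0\notin U$ the matrix $\bigl(u_i^j\bigr)_{1\le i,j\le |U|}$ has nonzero determinant $\bigl(\prod u_i\bigr)\prod_{i<j}(u_j-u_i)$; when $0\in U$ one removes its zero column and re-runs the argument on the remaining Vandermonde block.)

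Adjacency in $K^2(2k+r,k)$ splits into two cases. If $|A\cap B|\ge k-r$, then $|T|\le 2r$, so the lemma combined with the evenness of $|T|$ yields $T=\emptyset$, contradicting $A\neq B$. If $A\cap B=\emptyset$, then $|T|=2k$ and $R:=[n]\setminus T$ has $|R|=r$; the assumption $r\le k-2$ yields $2r<q-1$, so $p_m(\mathbb{F}_q)=p_m(\mathbb{F}_q^{\ast})=0$ for $1\le m\le 2r$, and hence $p_m(R)=p_m([n])+p_m(T)=0$ for the same range. The lemma applied to $R$ forces $R\subseteq\{0\}$: this is impossible in case~(b) because $0\notin[n]\supseteq R$, and impossible in case~(a) because $2k+r=2^t$ forces $r$ to be even, so $|R|=r\ge 2$.

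The main subtlety is the bookkeeping of the element $0\in\mathbb{F}_q$, which obstructs the Vandermonde lemma from concluding $T=\emptyset$ outright; this is precisely what makes the theorem split into two cases according to the parity of $r$, and is the only nontrivial point in the proof once the BCH-style framework is in place. In particular, specializing to $r=1$ recovers the earlier bound of Kim and Park for $2k+1=2^t-1$.
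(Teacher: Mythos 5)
Your proof is correct, and it takes a genuinely different route from the paper's. The paper colors with the elementary symmetric functions, $f(A)=(e_1(A),\ldots,e_r(A))$ over $\mathbb{F}_{2^t}$, and needs two separate lemmas: an inductive argument on $e_i(A\setminus B)$ versus $e_i(B\setminus A)$ followed by a root-counting contradiction for the case $k-r\le|A\cap B|\le k-1$, and, for the disjoint case, a separate argument showing that all odd $e_i$ of the complement $S=X\setminus(A\cup B)$ vanish, so that $\prod_{a\in S}(x-a)$ is (up to a factor of $x$) a polynomial in $x^2$ and hence has too few roots to contain all of $S$. You instead color with the odd power sums $(p_1,p_3,\ldots,p_{2r-1})$, exploit additivity over symmetric differences and the Frobenius identity $p_{2m}=p_m^2$ to get $p_m(A\triangle B)=0$ for all $m\le 2r$, and dispatch both adjacency cases with a single Vandermonde lemma (the standard BCH-bound argument); your bookkeeping of the element $0$ and of the parity of $r$ in case (a) is exactly right, and the needed range condition $2r<q-1$ does follow from $r\le k-2$. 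Your version is more unified and arguably cleaner for this theorem. What the paper's choice buys is reuse: the same elementary-symmetric coloring works over an arbitrary prime field $\mathbb{Z}_p$ to give the injective-coloring bound of Theorem \ref{injective} and hence Theorem \ref{main-result-general}, where the characteristic-2 Frobenius doubling that your argument relies on is unavailable; Theorem \ref{main-result-special} then comes from a sharper analysis of that one fixed coloring rather than from a new one. Your approach, conversely, makes the reduction to the symmetric difference immediate and replaces the paper's two bespoke lemmas with one classical linear-algebra fact.
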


An \textit{injective $k$-coloring} of a graph is a $k$-coloring so that two vertices sharing a neighbor must have different colors. The \textit{injective chromatic number} $\chi_i(G)$ of a graph $G$ is the smallest integer $k$ such that $G$ admits an injective $k$-coloring. (See \cite{Borodin2013} for a survey).
 The coloring of $G^2$ is related with injective coloring of $G$.  In general, for each graph $G$, we have $\Delta(G) \leq \chi_i (G) \leq \chi(G^2)$.
Our approach also gives an upper bound on the injective coloring of $K^2 (2k+r, k)$.  In Theorem \ref{injective}, we show that
$\chi_i(K(2k+r,k)) \le \left(\frac{9}{4}k + \frac{9(r+3)}{8} \right)^r$ for any integer $r$ with $1 \leq r \leq k -2$.

Johnson graph $J(n, k)$  is the graph whose
vertex set consists of all $k$-subsets of a fixed $n$-set, and two $k$-sets $A$ and $B$ are adjacent if and only if $|A \cap B| = k-1$.  Let $J^m (2k+r, k)$ be the $m$th power of Johnson graph $J(2k+r, k)$, defined by the graph with the same vertex set with two vertices adjacent if they are in distance at most $m$ in $J(2k+r, k)$.
Then $A$ and $B$ are adjacent in $J^m (2k+r, k)$ if and only if $k-m \leq |A \cap B| \leq k-1$.
Thus the coloring of $r$th power of the Johnson graph $J(2k+r, k)$ is the same as the injective coloring of $K(2k+r, k)$.  In general, in Corollary \ref{Johnson} we show that $\chi(J^m (2k+r, k)) \leq
\left(\frac{9}{4}k + \frac{9(r+3)}{8} \right)^m$ for any integer $m$ with $2 \leq m \leq r$.

\medskip
On the other hand, Kim and Park \cite{KP2014} showed the for any real number $\epsilon > 0$,
\[\limsup_{k \rightarrow \infty} \frac{\chi(K^2(2k+1,k))}{k} \le  2 +\epsilon.\]
In this paper, we give a similar result for $r \geq 2$, that is,
for any real number $\epsilon > 0$, and for any integer $r \geq 2$,
\begin{equation*}\label{infinity-r}
\limsup_{k \rightarrow \infty} \frac{\chi(K^2(2k+r,k))}{k^r} \le  2 \cdot 2^r+\epsilon.
\end{equation*}

\medskip

The rest of the paper is organized as follows.
In Sections \ref{section-injective} and \ref{section-special-case}, we will give proofs of Theorems \ref{main-result-general} and \ref{main-result-special}, respectively.
%prove Theorem \ref{injective} and then
%give a proof of Theorem \ref{main-result-general}.  In Section \ref{section-special-case}, we will prove %Theorem \ref{main-result-special}.
In Section 4, we will discuss  asymptotic results.

%%%%%%%%%%%%%%%%%%%%%%%%%%%%%%%%%%%%%%%%%%%%%%%%%%%%%%%%%%%%%%%

\section{Proof of Theorem \ref{main-result-general}} \label{section-injective}

Throughout this section, fix positive integers $k$ and $r$ such that $1\le r\le k-2$.
Let $F$ be a finite field, and let  $0_F$ and $1_F$ be the identity under addition and  multiplication, respectively, in $F$.
For any nonnegative integer $i$ and for $Z\subset F$, let $e_i(Z)$ be the sum of all distinct products of $i$ distinct elements in $Z$.
That is, when $Z=\{ z_1,\ldots,z_n\}$,
\[ {\displaystyle e_i(Z)=\sum_{1 \leq {t_1}<{t_2}<\cdots<t_i \leq n} z_{t_1}z_{t_2}\cdots z_{t_i} }. \]
Define $e_0(Z)=1_F$ and $e_i(Z)=0_F$ if $i>|Z|$.
For example, when $Z = \{x_1, x_2, x_3, x_4 \}$,  $e_1 (Z) = x_1 + x_2 + x_3 + x_4$, $e_2(Z)=x_1x_2+x_1x_3+x_1x_4+x_2x_3+x_2x_4+x_3x_4$,
and $e_5(Z)=0$.

\medskip

The following is an easy observation, but it is useful.

\begin{lemma}\label{lem_basic}
For two disjoint sets $X$ and $Y$,
\[ e_i(X\cup Y) = \sum_{s=0}^{i} e_s(X)e_{i-s}(Y).\]
\end{lemma}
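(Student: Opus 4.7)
The plan is to prove the identity by a direct combinatorial argument, just unpacking the definition of $e_i$ and exploiting the disjointness $X\cap Y=\emptyset$. First, I would rewrite the left-hand side as a sum over $i$-element subsets: by definition,
\[ e_i(X\cup Y)=\sum_{T\in \binom{X\cup Y}{i}} \prod_{z\in T} z, \]
where we adopt the usual empty-product convention $\prod_{z\in\emptyset}z=1_F$ to stay consistent with $e_0(Z)=1_F$.

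Next, I would partition this sum according to how many elements of $T$ lie in $X$. Because $X$ and $Y$ are disjoint, every $T\subseteq X\cup Y$ decomposes uniquely as $T=T_X\sqcup T_Y$ with $T_X=T\cap X\subseteq X$ and $T_Y=T\cap Y\subseteq Y$. Setting $s=|T_X|$, and noting $0\le s\le i$ with $|T_Y|=i-s$, the sum regroups as
\[ e_i(X\cup Y)=\sum_{s=0}^{i}\;\sum_{\substack{T_X\in\binom{X}{s}\\ T_Y\in\binom{Y}{i-s}}} \Bigl(\prod_{x\in T_X} x\Bigr)\Bigl(\prod_{y\in T_Y} y\Bigr). \]
The inner double sum factors by distributivity into $\bigl(\sum_{T_X\in\binom{X}{s}}\prod_{x\in T_X}x\bigr)\bigl(\sum_{T_Y\in\binom{Y}{i-s}}\prod_{y\in T_Y}y\bigr)=e_s(X)\,e_{i-s}(Y)$, and plugging this back in gives exactly the claimed formula.

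The last thing to verify is that the boundary conventions $e_0(Z)=1_F$ and $e_j(Z)=0_F$ for $j>|Z|$ are compatible with the range of the summation index. When $s>|X|$ there is no valid $T_X$, and the corresponding term of the right-hand side vanishes because $e_s(X)=0_F$; symmetrically for $i-s>|Y|$. The terms $s=0$ and $s=i$ correctly use $e_0(Y)=1_F$ and $e_0(X)=1_F$ respectively, matching the empty-product convention on the left. There is no real obstacle here — the whole content is bookkeeping — but the one point that deserves a sentence in the final write-up is precisely this compatibility of conventions, so that the summation on the right may be taken over the clean range $s=0,1,\ldots,i$ without worrying about whether $X$ or $Y$ is too small.
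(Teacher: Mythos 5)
Your proof is correct: the decomposition of each $i$-subset $T$ of $X\cup Y$ into $T\cap X$ and $T\cap Y$ (using disjointness for uniqueness), followed by distributivity and a check of the conventions $e_0(Z)=1_F$ and $e_j(Z)=0_F$ for $j>|Z|$, is exactly the standard argument. The paper states this lemma without proof, calling it an easy observation, so there is nothing to compare against; your write-up supplies precisely the bookkeeping the authors omitted.
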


Let ${F \choose k}$ denote the set of $k$-subsets of $F$.
Define the function $f:{F \choose k} \rightarrow F^r$ by
\begin{equation}\label{main-function}  f(A)=(e_1(A), e_2(A), \ldots, e_r(A)) . \end{equation}

\begin{lemma} \label{lem:intersect}
Let $F$ be a  finite field.  If $A$ and $B$ are subsets of $F$ such that $|A|=|B|=k$ and $k-r\le |A\cap B| \le k-1$, then $f(A)\neq f(B)$ where $f$ is the function defined in (\ref{main-function}).
\end{lemma}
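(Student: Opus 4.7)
\medskip

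\noindent\textbf{Proof plan.} Assume toward a contradiction that $f(A)=f(B)$, i.e.\ $e_i(A)=e_i(B)$ for every $i=1,\ldots,r$. Write $C=A\cap B$, $A'=A\setminus B$, $B'=B\setminus A$, and set $s=k-|C|$, so $1\le s\le r$ and $|A'|=|B'|=s$. The strategy is to compare the elementary symmetric functions of $A$ and $B$ term by term, peel off the common contribution of $C$ using Lemma \ref{lem_basic}, and conclude that the elementary symmetric polynomials of $A'$ and $B'$ must agree through degree $s$; this forces $A'=B'$, contradicting $A'\cap B'=\emptyset$ and $A'\neq\emptyset$.

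First I would apply Lemma \ref{lem_basic} to the disjoint unions $A=A'\cup C$ and $B=B'\cup C$ to obtain, for each $i=1,\ldots,r$,
\[
e_i(A)-e_i(B)=\sum_{j=0}^{i}\bigl(e_j(A')-e_j(B')\bigr)\,e_{i-j}(C).
\]
Since $e_0(A')=e_0(B')=1_F$, the $j=0$ term vanishes, so the hypothesis $e_i(A)=e_i(B)$ becomes
\[
\sum_{j=1}^{i}\bigl(e_j(A')-e_j(B')\bigr)\,e_{i-j}(C)=0 \qquad (i=1,\ldots,r).
\]
This is a lower-triangular linear system in the unknowns $d_j:=e_j(A')-e_j(B')$ with $1_F$'s on the diagonal (coming from $e_0(C)=1_F$). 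A quick induction on $i$ therefore gives $d_i=0$ for all $i=1,\ldots,r$; in particular $e_j(A')=e_j(B')$ for every $j=1,\ldots,s$ (recall $s\le r$).

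Next, I would translate this into a polynomial identity. The monic polynomial
\[
p_{A'}(x)=\prod_{a\in A'}(x-a)=x^{s}-e_1(A')x^{s-1}+e_2(A')x^{s-2}-\cdots+(-1)^{s}e_s(A')
\]
is determined by $e_1(A'),\ldots,e_s(A')$, and similarly for $p_{B'}$. The equalities just derived give $p_{A'}=p_{B'}$, so $A'$ and $B'$ are the root sets of the same polynomial over the field $F$, hence $A'=B'$. But $A'\cap B'=\emptyset$ by definition, so $A'=B'=\emptyset$, i.e.\ $|A\cap B|=k$, contradicting $|A\cap B|\le k-1$.

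The argument is essentially formal, so I expect no real obstacle. The only point that requires a little care is the inductive step extracting $d_i=0$ from the triangular system; this relies crucially on $e_0(C)=1_F$ (which is guaranteed by working over a field, so that the diagonal coefficients are units) and on the fact that we are given $r$ equations, which is just enough to reach $d_s$ since $s\le r$. If $s$ were larger than $r$, the information $f(A)=f(B)$ would be too weak, which is exactly why the hypothesis $|A\cap B|\ge k-r$ is sharp.
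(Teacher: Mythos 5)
Your proposal is correct and follows essentially the same route as the paper: both peel off the common part $C=A\cap B$ via Lemma \ref{lem_basic}, deduce $e_j(A')=e_j(B')$ for $j\le s$ by the same triangular induction, and then derive a contradiction from the identity of the monic polynomials $\prod_{a\in A'}(x-a)$ and $\prod_{b\in B'}(x-b)$ (the paper just evaluates this identity at a single element $b_1\in B'$ rather than comparing full root sets, which is an immaterial difference). No gaps.
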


\begin{proof}
We will prove the lemma by contradiction.
Suppose that $f(A) = f(B)$.  It implies that
$e_i (A) = e_i(B)$ for all $i\in \{1,2,\ldots,r\}$.
Note that $|A\cap B|=k-q$ for some $q\in \{1,2,\ldots,r\}$.
Let $A'= A\setminus B =\{a_1,a_2,\ldots,a_q\}$ and $B' = B\setminus A =\{b_1,b_2,\ldots,b_q\}$.

We will show that  $e_i (A') = e_i(B')$ for all $i\in \{1,2,\ldots,q\}$ by the induction on $i$.
Since $A'$ and $A\cap B$ are disjoint and $B'$ and $A\cap B$ are disjoint, we have
\[ e_1 (A) = e_1 (A') + e_1 (A \cap B) = e_1 (B') + e_1 (A \cap B) = e_1 (B). \]
Thus $e_1(A')=e_1(B')$. Therefore the basis step holds.

Now for some $i\in \{1,2,\ldots,q\}$, assume that $e_s(A') = e_s(B')$ for all $s \le i$.
We will show that  $e_{i+1}(A') = e_{i+1}(B')$.
On the other hand, by Lemma~\ref{lem_basic}, we have
\begin{eqnarray*}
&&e_{i+1}(A)=e_{i+1}(A')+\sum_{s=0}^{i} e_s(A')e_{i+1-s}(A\cap B), \\
&&e_{i+1}(B)=e_{i+1}(B')+\sum_{s=0}^{i} e_s(B')e_{i+1-s}(A\cap B).
\end{eqnarray*}
%Since $i\le q\le r-1<r$, it is clear that $i+1 \le r$.
Note that $i +1 \leq r$.
Thus from the condition $e_{i+1}(A)=e_{i+1}(B)$,
\begin{equation} \label{e(i+1)}
e_{i+1}(A')+\sum_{s=0}^{i} e_s(A')e_{i+1-s}(A\cap B) = e_{i+1}(B')+
\sum_{s=0}^{i} e_s(B')e_{i+1-s}(A\cap B).
\end{equation}
Since $e_{s}(A')=e_{s}(B')$ for all $0 \leq s \leq i$ by the induction hypothesis, from (\ref{e(i+1)}) we have
\[ e_{i+1}(A')=e_{i+1}(B').\]
Thus by the induction, we have $e_i (A') = e_i(B')$ for all  $i\in \{1,2,\ldots,q\}$.

\medskip
Note that
\begin{eqnarray*}
(b_1-a_1)(b_1-a_2)\cdots (b_1-a_q)
%(a_1-b_1)(a_2-b_1)\cdots (a_q-b_1)
&=&b_1^q - e_1(A')b_1^{q-1}+ \cdots+ e_q(A') \\
&=&b_1^q - e_1(B')b_1^{q-1}+ \cdots+ e_q(B')\\
&=&(b_1-b_1)(b_2-b_1)\cdots (b_q-b_1) =0_F.
\end{eqnarray*}
Hence $(b_1-a_1)(b_1-a_2)\cdots (b_1-a_q)= 0_F$.
Thus there is an element in $a_i\in A'$ such that $b_1 - a_i = 0_F$, since $F$ is a field.
Then $b_1 = a_i$, which is a contradiction to the fact that $A'$ and $B'$ are disjoint.
\end{proof}

The following is a recent result on the existence of a prime number in a certain interval.

\begin{theorem} \label{prime-2n} {\cite{prime2}}
For any  integer $ n \geq 2$, there  is a prime number $p$ such that $n \le  p \le \frac{9(n+3)}{8}$.
\end{theorem}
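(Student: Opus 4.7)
The plan is to prove the claim by combining explicit analytic bounds on the prime counting function $\pi(x)$ with a finite computational check for small $n$. Observe that the target interval $[n,\frac{9(n+3)}{8}]$ has length $\frac{n+27}{8}$, and the ratio $\frac{9(n+3)/8}{n}$ tends to $\frac{9}{8}$ as $n\to\infty$. Hence the statement strengthens Bertrand's postulate (ratio $2$) and Nagura's theorem (ratio $6/5$ for $n\ge 25$), so it should be accessible via the same Chebyshev-style machinery used to prove those results.

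For the main asymptotic step, I would prove that
\[ \pi\!\left(\tfrac{9(n+3)}{8}\right) > \pi(n-1) \]
for all $n$ exceeding some explicit threshold $N_0$. The most economical route is to apply the Rosser--Schoenfeld explicit inequalities: for $x$ sufficiently large,
\[ \frac{x}{\log x} < \pi(x) < \frac{x}{\log x}\left(1+\frac{3}{2\log x}\right). \]
Plugging the lower bound in at $x=\frac{9(n+3)}{8}$ and the upper bound in at $x=n-1$, then subtracting and simplifying, one obtains an inequality whose dominant term is proportional to $\frac{n}{\log n}$ with positive coefficient, competing against an error of order $\frac{n}{(\log n)^2}$. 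Solving for when the main term dominates yields a workable $N_0$.

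For the remaining range $2\le n\le N_0$, I would simply check each $n$ against a table of primes, exhibiting one prime in each interval. The first few cases are easy: $n=2$ gives $[2,5.625]$ containing $3$; $n=3$ gives $[3,6.75]$ containing $5$; and so on. Provided $N_0$ is of modest size, this is routine.

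The main obstacle is squeezing $N_0$ down to something small enough to verify efficiently, because the ratio $\frac{9}{8}$ leaves only an $\frac{1}{8}$ multiplicative gap in the Rosser--Schoenfeld estimate, and the secondary error term of size $\frac{1}{\log n}$ must be beaten before the argument closes. An alternative that sidesteps this delicate balancing is an Erd\H{o}s-style elementary approach, generalizing the proof of Bertrand's postulate by analyzing the $p$-adic valuations of $\binom{an}{bn}$ for carefully chosen rationals $a,b$ with $a/b$ close to the desired ratio; this recovers the inequality by combinatorial counting alone but demands a more intricate case analysis. Either route should suffice to establish the stated bound.
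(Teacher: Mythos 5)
The paper does not prove this statement at all: it is imported verbatim from the cited reference \cite{prime2} (Paz, \emph{Gen. Math. Notes} 2013), so there is no in-paper argument to compare yours against. Your blueprint --- explicit Rosser--Schoenfeld-type bounds on $\pi(x)$ to handle $n\ge N_0$, plus a finite verification below $N_0$ --- is indeed the standard way results of this Nagura/Dusart type are established, and nothing in it is wrongheaded.

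That said, as written your proposal has a genuine gap: it is a plan, not a proof. You never compute $N_0$, and the margin analysis matters here. Comparing the lower bound $\pi(x)>x/\log x$ at $x=\tfrac{9(n+3)}{8}$ with the upper bound $\pi(x)<\tfrac{x}{\log x}\bigl(1+\tfrac{3}{2\log x}\bigr)$ at $x=n$, the inequality you need reduces to roughly $\tfrac18>\tfrac{1}{\log n}\bigl(\tfrac32+\tfrac98\log\tfrac98\bigr)$, i.e.\ $\log n\gtrsim 13$, so $N_0$ comes out on the order of $5\times 10^5$. The remaining ``routine'' check is therefore a nontrivial computation that is doing real work and must actually be performed (it is feasible, and is best organized not by checking each $n$ but by verifying that every pair of consecutive primes $p<p'$ below $N_0$ satisfies $p'\le \tfrac{9(p+4)}{8}$; the tight spots are the maximal gaps, e.g.\ $113\to127$). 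Two smaller points: the lower bound $x/\log x<\pi(x)$ is only valid for $x\ge 17$, so the constants and validity ranges of whichever explicit estimates you invoke must be stated; and the Erd\H{o}s-style alternative via $\binom{an}{bn}$ is known to degrade rapidly as the target ratio approaches $1$, so for ratio $9/8$ it is unlikely to be easier than the analytic route. Until the threshold is pinned down and the finite range is actually verified, the statement is not proved; in the context of this paper the honest ``proof'' is simply the citation.
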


Note that the function $f$ in (\ref{main-function}) admits an injective coloring of the Kneser graph $K(2k+r, k)$ by Lemma \ref{lem:intersect}.
Thus, from Lemma \ref{lem:intersect} and Theorem \ref{prime-2n}, we have the following result.

\begin{theorem}\label{injective}
For any integer $1 \leq r \leq k-2$, we have $\chi_i(K(2k+r,k)) \le \left(\frac{9}{4}k + \frac{9(r+3)}{8} \right)^r$.
\end{theorem}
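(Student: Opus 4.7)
The plan is to combine Lemma~\ref{lem:intersect} with the prime-interval bound from Theorem~\ref{prime-2n}. First I would recall what "injective coloring" means for $K(2k+r,k)$: two $k$-subsets $A$ and $B$ share a common neighbor (a $k$-subset disjoint from both) if and only if $|A\cup B|\le (2k+r)-k=k+r$, which rewrites as $|A\cap B|\ge k-r$. Hence, after excluding $A=B$, an injective coloring needs to separate exactly those pairs $A,B$ with $k-r\le |A\cap B|\le k-1$ — which is precisely the hypothesis of Lemma~\ref{lem:intersect}.

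Next I would choose the field. Apply Theorem~\ref{prime-2n} with $n=2k+r$ (note $n\ge 2$ since $k\ge 3$) to obtain a prime $p$ with
\[
2k+r \;\le\; p \;\le\; \frac{9(2k+r+3)}{8} \;=\; \frac{9}{4}k+\frac{9(r+3)}{8}.
\]
Take $F=\mathbb{F}_p$. Since $|F|=p\ge 2k+r$, fix any injection of the $(2k+r)$-element ground set of $K(2k+r,k)$ into $F$ and identify each vertex with its image, a $k$-subset of $F$.

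Now define the coloring $f:\binom{F}{k}\to F^r$ by $f(A)=(e_1(A),\ldots,e_r(A))$ as in~(\ref{main-function}). By Lemma~\ref{lem:intersect}, whenever $A\ne B$ are $k$-subsets of $F$ with $k-r\le |A\cap B|\le k-1$, we have $f(A)\ne f(B)$. Combined with the observation in the first paragraph, this means $f$ assigns distinct values to any two distinct vertices of $K(2k+r,k)$ that share a neighbor, so $f$ is an injective coloring. The number of colors used is at most
\[
|F^r| \;=\; p^r \;\le\; \left(\frac{9}{4}k+\frac{9(r+3)}{8}\right)^{\!r},
\]
which is the desired bound.

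There is no real obstacle here: the conceptual work has been done in Lemma~\ref{lem:intersect} (which handles the injectivity of $f$ on the relevant intersection range) and in Theorem~\ref{prime-2n} (which supplies a prime close enough to $2k+r$). The only mild point to verify is that the injective-coloring constraint for $K(2k+r,k)$ coincides with the intersection range in Lemma~\ref{lem:intersect}, and that the arithmetic $\frac{9(2k+r+3)}{8}=\frac{9}{4}k+\frac{9(r+3)}{8}$ matches the stated bound; both are straightforward.
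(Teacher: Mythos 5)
Your proposal is correct and follows essentially the same route as the paper: invoke Theorem~\ref{prime-2n} with $n=2k+r$ to get a prime $p\le \frac{9(2k+r+3)}{8}$, embed the ground set into $\mathbb{F}_p$, and apply Lemma~\ref{lem:intersect} to see that $f$ separates any two vertices sharing a common neighbor. The only difference is that you spell out explicitly why the common-neighbor condition is equivalent to $k-r\le|A\cap B|\le k-1$, which the paper states without the $|A\cup B|$ calculation.
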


\begin{proof}
Let $k$ and $r$ be positive integers with $1 \leq r \leq k-2$.
From Theorem \ref{prime-2n}, there is a prime $p$ such that $ 2k+r \le  p \le \frac{9(2k+r+3)}{8} =  \frac{9}{4}k + \frac{9(r+3)}{8}$.
Let $F = \mathbb{Z}_p$ be a finite field with $ 2k+r \le |F| \le \frac{9}{4}k + \frac{9(r+3)}{8}$.

Now we take $X\subset F$ such that $|X|=2k+r$.
We define the Kneser graph $K(2k+r, k)$ on the ground set $X$.
Note that two vertices $A$ and $B$ have  a common neighbor in
$K(2k+r, k)$ if and only if $k-r\le |A\cap B| \le k-1$.  And note that $f(A)\neq f(B)$ when $k-r\le |A\cap B|  \le k-1$ by Lemma~\ref{lem:intersect}.
Thus $f$ gives  an injective coloring of $G$.  This implies that
$\chi_i(K(2k+r, k))\le |F|^r \leq \left(\frac{9}{4}k + \frac{9(r+3)}{8} \right)^r$.
%Therefore $\chi_i(K(2k+r,k)) \leq \left(\frac{9}{4}k + \frac{9(r+3)}{8} \right)^r$.
\end{proof}

\medskip
Now,  we will complete the proof of Theorem \ref{main-result-general}.

\bigskip

\noindent{\it Proof of Theorem \ref{main-result-general}.}  Kim and Oum \cite{KO2009} showed that  $\chi(G^2)\le 2\chi_i(G)$ for any graph $G$.
Thus
\[
\chi(K^2 (2k+r, k)) \leq 2 \cdot \chi_i (K(2k+r, k)) = 2  \left(\frac{9}{4}k + \frac{9(r+3)}{8} \right)^r.
\]
This completes the proof of Theorem \ref{main-result-general}.
\qed

\bigskip

Then  from Lemma \ref{lem:intersect} and Theorem \ref{injective},  we have the following corollary about Johnson graph.

\begin{corollary}\label{Johnson}
For $2 \leq m \leq r$,  we have $\chi(J^m (2k+r, k)) \leq
\left(\frac{9}{4}k + \frac{9(r+3)}{8} \right)^m$.
\end{corollary}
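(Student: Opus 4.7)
The plan is to reuse the elementary-symmetric-polynomial coloring from Theorem \ref{injective}, but truncated to only the first $m$ coordinates, since in $J^m(2k+r,k)$ adjacency requires the stricter condition $k-m\le |A\cap B|\le k-1$ rather than $k-r\le |A\cap B|\le k-1$.

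First, I would state and prove an $m$-version of Lemma \ref{lem:intersect}: for any $k$-subsets $A, B$ of a finite field $F$ with $k-m \le |A\cap B| \le k-1$, the truncated map $f_m(A) := (e_1(A), e_2(A), \ldots, e_m(A))$ satisfies $f_m(A) \ne f_m(B)$. The proof is essentially a word-for-word copy of the proof of Lemma \ref{lem:intersect}, with $r$ replaced by $m$ throughout. The key point is that the induction establishing $e_i(A') = e_i(B')$ for $A' = A\setminus B$ and $B' = B\setminus A$ only invokes the equations $e_{i+1}(A) = e_{i+1}(B)$ for $i+1$ up to $q := |A\setminus B|$. Under the hypothesis $|A \cap B| \ge k-m$ we have $q \le m$, so the induction carries through using only $e_1,\ldots,e_m$, and the final factorization argument $(b_1-a_1)\cdots(b_1-a_q) = 0_F$ yields the desired contradiction exactly as before.

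Second, by Theorem \ref{prime-2n} I would select a prime $p$ with
\[ 2k + r \le p \le \frac{9}{4}k + \frac{9(r+3)}{8}, \]
set $F = \mathbb{Z}_p$, embed the ground $(2k+r)$-set into $F$, and use $f_m$ as a coloring of $J^m(2k+r,k)$. By the adapted lemma, $f_m$ is proper, and the number of colors used is at most
\[ |F|^m \le \left(\frac{9}{4}k + \frac{9(r+3)}{8}\right)^m. \]

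There is no real obstacle here; the corollary is a straightforward observation that the coloring machinery already developed for injective coloring of $K(2k+r,k)$ (which corresponds exactly to the case $m=r$) transfers verbatim to every smaller power $J^m$ of the Johnson graph, simply because stricter adjacency demands fewer elementary symmetric polynomials as coordinates.
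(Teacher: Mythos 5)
Your proposal is correct and matches the paper's own proof: the paper likewise uses the truncated map $f(A)=(e_1(A),\ldots,e_m(A))$ over a prime field of size between $2k+r$ and $\frac{9}{4}k+\frac{9(r+3)}{8}$, invoking ``the same argument of Lemma \ref{lem:intersect}'' for properness. Your explicit remark that the induction in that lemma only needs $e_1,\ldots,e_m$ because $q=|A\setminus B|\le m$ is exactly the point the paper leaves implicit.
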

\begin{proof}
Let $F$ be the finite field with $|F| = p$ for some prime $p$ such that $ 2k+r \le p \le \frac{9}{4}k + \frac{9(r+3)}{8}$, and $X\subset F$ such that $|X|=2k+r$.
We define the Johnson graph $J(2k+r, k)$ on the ground set $X$.
Then two vertices $A$ and $B$ in $J^m(2k+r,k)$ are adjacent if and only if
$|A\cap B|=k-q$ for some $q\in \{1,2,\ldots,m\}$.
Then the function $f:{F \choose k} \rightarrow F^m$ defined by
\[f(A)=(e_1(A),e_2(A),\ldots, e_m(A)),\]
gives a proper coloring of $\chi(J^m (2k+r, k))$ by the same argument of Lemma \ref{lem:intersect}.
Thus
\[\chi(J^m (2k+r, k)) \leq |F|^m \leq
\left(\frac{9}{4}k + \frac{9(r+3)}{8} \right)^m.
\]
\end{proof}

%%%%%%%%%%%%%%%%%%%%%%%%%%%%%%%%%%%%%%%%%%%%%%%%%%%%%%%%%%%%%%%%%%%

%%%%%%%%%%%%%%%%%%%%%%%%%%%%%%%%%%%%%%%%%%%%%%%%%%%%%%%%%%%%%%%%%%%
\section{Proof of Theorem \ref{main-result-special}}  \label{section-special-case}

In Theorem \ref{injective}, we showed that
the function $f$ defined in (\ref{main-function}) gives  an injective coloring of $K(2k + r, k)$.
In this section, we will show that when $2k+r = 2^t$ or $2^t -1$ for some integer $t$,   the function $f$  also gives a proper  coloring of the square of the Kneser graph $K(2k+r,k)$.

\begin{lemma}\label{lem:disjoint}
Let $r \geq 2$.  Let $F$ be a finite field of characteristic 2 and let $X$ be a subset of $F$ with $|X| = 2k+r$ such that $e_{2m+1}(X)=0_F$ for any nonnegative integer $m$ with $2m+1\le r$.
% and $r\ge 2$.
If $A$ and $B$ are two disjoint subsets in $X$ with $|A| = |B| = k$, then
$f(A)\neq(B)$ where  $f$ is the function defined in (\ref{main-function}).
\end{lemma}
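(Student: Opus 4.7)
The plan is to argue by contradiction. Suppose $f(A)=f(B)$, and set $C := X\setminus(A\cup B)$; since $A,B$ are disjoint $k$-subsets of $X$, we have $|C|=r$. For any $S\subseteq F$, introduce the generating polynomial
\[P_S(t) := \prod_{s\in S}(1+st) = \sum_{i\ge 0} e_i(S)\, t^i \in F[t],\]
so that $P_X(t) = P_A(t)\, P_B(t)\, P_C(t)$. The assumption $e_i(A)=e_i(B)$ for $i=0,1,\ldots,r$, combined with $\mathrm{char}(F)=2$, gives $P_A(t)\equiv P_B(t)\pmod{t^{r+1}}$, and hence $P_A(t)P_B(t)\equiv P_A(t)^2\pmod{t^{r+1}}$. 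The Frobenius identity $(1+at)^2=1+a^2t^2$ shows that $P_A(t)^2 = \sum_j e_j(A)^2\, t^{2j}$ is a polynomial in $t^2$.

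Putting these together, $P_X(t)\equiv P_A(t)^2 \cdot P_C(t) \pmod{t^{r+1}}$. For each odd index $2m+1\le r$, the hypothesis $e_{2m+1}(X)=0_F$ then yields
\[0_F \;=\; [t^{2m+1}]\bigl(P_A(t)^2\, P_C(t)\bigr) \;=\; \sum_{l=0}^{m} e_{m-l}(A)^2 \cdot e_{2l+1}(C),\]
which is a lower-triangular linear system in the unknowns $e_1(C), e_3(C),\ldots$ with diagonal entries $e_0(A)^2 = 1_F$. Forward substitution (starting from $m=0$) gives $e_{2l+1}(C)=0_F$ for every $l$ with $2l+1\le r$, so every odd-degree coefficient of $P_C(t)$ vanishes.

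For the final contradiction, let $s$ denote the number of nonzero elements of $C$; since $|C|=r\ge 2$, we have $s\ge r-1\ge 1$, and $\deg P_C(t)=s$. Because $\deg P_C\le r$ and every odd coefficient of $P_C$ is zero, $P_C(t)$ lies in $F[t^2]$. If $s$ is odd, its leading term has odd degree, contradicting $P_C(t)\in F[t^2]$. If $s$ is even (so $s\ge 2$), then since finite fields of characteristic 2 are perfect, every coefficient of $P_C$ admits a unique square root in $F$, and Frobenius yields $P_C(t)=R(t)^2$ for some $R\in F[t]$ with $\deg R=s/2\ge 1$; but $P_C$ is separable, having $s$ distinct nonzero roots $\{1/c:c\in C,\ c\ne 0\}$, contradicting $P_C=R^2$.

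The main obstacle I anticipate is identifying the right characteristic-2 algebraic identity that concentrates the constraint on $C$: specifically, the triple combination of $P_AP_B\equiv P_A^2\pmod{t^{r+1}}$, the Frobenius reduction $P_A^2\in F[t^2]$, and the hypothesis that only the \emph{odd}-index $e_i(X)$ vanish must conspire so that $P_C$ itself lies in $F[t^2]$. Once that reduction is in hand, the finishing step—separable polynomial cannot be a perfect square—is routine.
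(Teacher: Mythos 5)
Your proof is correct and follows essentially the same route as the paper: both arguments first show that every odd elementary symmetric function of $C=X\setminus(A\cup B)$ vanishes (the paper by a direct induction using $e_{2s-1}(A\cup B)=0_F$ in characteristic $2$, you by the equivalent generating-polynomial/Frobenius computation $P_X\equiv P_A^2P_C\pmod{t^{r+1}}$ with forward substitution), and both then reach a contradiction from the fact that the polynomial attached to the $r\ge 2$ distinct elements of $C$ would have to be a polynomial in the square of the variable. The only cosmetic difference is the endgame --- the paper counts roots of $h(y)$ with $y=x^2$ to produce distinct $p,q\in C$ with $p^2=q^2$, while you write $P_C=R^2$ via perfectness of $F$ and contradict separability --- but these are the same underlying fact.
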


\begin{proof}
Suppose that $f(A)=f(B)$.
Then by the definition of $f$,
\begin{equation}\label{eq_basic}
e_i (A) = e_i(B) \mbox{ for all } 1 \leq i \leq r.
\end{equation}

\begin{claim}\label{claim:disjoint}
For any nonnegative integer $m$ with $2m+1\le r$, we have
$e_{2m + 1} ( X\setminus(A\cup B) ) = 0_F$.
\end{claim}

\begin{proof}
Let $S=X\setminus(A\cup B)$.
We will prove the claim by the induction on $m$.
Since $e_{1}(X)=0_F$, we have
\[ 0_F=e_{1}(X)=e_1(S)+e_1(A)+e_1(B). \]
By (\ref{eq_basic}),
\[0_F=e_1(X)=e_1(S)+2e_1(A) = e_1(S).\]
Therefore $e_1(S)=0_F$, so the claim is true when $m=0$.

Now suppose that $e_{2i +1} (S) = 0_F$ for all $i$ such that $1 \leq 2i+1 <2m+1$.
Next, we will show that $e_{2m+1} (S) = 0_F$.  Note that
\begin{equation*} \label{equation-S}
0_F=e_{2m+1}(X)= e_{2m+1}(S) + \sum_{s=1}^{2m+1} e_{s}(A\cup B)e_{2m+1-s}(S).
\end{equation*}
By the induction hypothesis that $e_{2i +1} (S) = 0_F$ for all $i$ with $1 \leq 2i+1 <2m+1$,
%Since $e_{2i +1} (S) = 0_F$ for all $i$ such that $1 \leq 2i+1 <2m+1$, the identity (\ref{equation-S})
it is simplified into
\begin{eqnarray} \label{equation-simple}
0_F=e_{2m+1}(S) +   \sum_{s=1}^{m+1}  e_{2s-1} (A\cup B) e_{2m+1-(2s-1)}(S).
\end{eqnarray}
Now, for any positive integer $s$ such that $2s-1\le r$, we have
\begin{eqnarray*}
e_{2s-1}(A\cup B) &=& \sum_{i=0}^{2s-1} e_{i}(A)e_{2s-1-i}(B)  \\
&=& \sum_{i=0}^{2s-1} e_{i}(A) e_{2s-1-i}(A)\\
& =& \sum_{i=0}^{s-1} \left( e_{i}(A) e_{2s-1-i}(A) + e_{2s-1-i}(A) e_{i}(A)  \right) \\
&=&\sum_{i=0}^{s-1} 2 e_{i}(A) e_{2s-1-i}(A) = 0_F,
\end{eqnarray*}
where the second equality is from (\ref{eq_basic}), and the last equality is from the fact that the characteristic of $F$ is 2.  Thus from (\ref{equation-simple}), we have $e_{2m+1}(S)=0_F$.  This completes  the proof of Claim \ref{claim:disjoint}.
\end{proof}

Now for $S=X\setminus(A\cup B)$, we consider a polynomial $g(x)=\Pi_{a\in S}(x-a)$ in $F[x]$. 
Note that $|S|=r$, since $|X| = 2k+r$ and  $A \cap B = \emptyset$.
Then
\begin{equation*} \label{function-g(x)}
g(x)=x^r-e_1(S)x^{r-1}+e_2(S)x^{r-2}+\cdots + (-1)^i e_i(S) x^{r-i} + \cdots + (-1)^r e_r(S).
\end{equation*}
Note that $S$ is the set of all the zeros of the polynomial $g(x)$.
By Claim~\ref{claim:disjoint},
$e_{i}(S)=0_F$ for any positive odd integer $i$ with $i\le r$. Thus
if $r$ is even, then
\[g(x)=x^r+e_2(S)x^{r-2}+e_4(S)x^{r-4}+\cdots + e_r(S). \]
On the other hand, if $r$ is odd, then
\[g(x)=x \big(x^{r-1}+e_2(S)x^{r-3}+e_4(S)x^{r-5}+\cdots + e_{r-1}(S) \big).\]

\noindent {\bf Case 1}: when $r$ is even

Define
\[h(y)=y^{\frac{r}{2}} + e_2(S) y^{\frac{r-2}{2}} + e_4(S) y^{\frac{r-4}{2}} + \cdots + e_r(S).\]
Then by substituting $x^{2}=y$, we have $h(y)=g(x)$.
Note that for any $a \in S$, $b = a^2$ is a root of $h(y)$, but the polynomial $h(y)$ has at most $\frac{r}{2}$ roots. Since $r\ge 2$, it holds that $|S| = r > \frac{r}{2}$.
Thus there are distinct elements $p$ and $q$ in $S$ such that $p^2=q^2$ .
Then $p^2+q^2=0_F$ since the characteristic of $F$ is 2.
Since $(p+q)^2=p^2+q^2$, $(p+q)^2=0_F$. Therefore $p+q=0_F$. It implies that $p = q$, which is a contradiction to the fact that $p$ and $q$ are distinct.

\medskip
\noindent {\bf Case 2}: when $r$ is odd

Then $g(x)=x\phi(x^2)$ where
\[\phi(y)=y^{\frac{r-1}{2}} + e_2(S) y^{\frac{r-3}{2}} + e_4(S) y^{\frac{r-5}{2}} + \cdots + e_{r-1}(S).\]
Note that for any $a \in S$, $b = a^2$ is a root of $\phi(y)$, but
$g(x)$ has at most $ \frac{r-1}{2} +1$ roots.
Since $r\ge2$, it holds that $|S| = r > \frac{r-1}{2} +1$.
It follows that $p^2=q^2$ for some two distinct elements $p,q\in S$.
Then $p^2+q^2=0_F$, which is a contradiction to the fact that $p$ and $q$ are distinct.
This completes the proof of Lemma \ref{lem:disjoint}. \end{proof}

\medskip

 The following is the main result of this section.

\begin{theorem}\label{main-result0}
Let $F$ be a finite field of characteristic 2.
For $2\le r\le k-1$, if  there is a subset $X$ of $F$ such that $|X|=2k+r$ and $e_{2m+1}(X)=0_F$ for all nonnegative integer $m$ with $2m+1\le r$, then
$\chi(K^2 (2k+r, k)) \leq |F|^r$.
\end{theorem}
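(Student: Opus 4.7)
The plan is to show that, under the hypotheses of the theorem, the map $f:{X \choose k} \to F^r$ defined by $f(A)=(e_1(A),e_2(A),\ldots,e_r(A))$ from (\ref{main-function}) is a proper coloring of $K^2(2k+r,k)$ when the Kneser graph is built on the ground set $X$. Since $f$ takes values in $F^r$, the color palette has size at most $|F|^r$, which will give the desired bound $\chi(K^2(2k+r,k))\le |F|^r$.

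First I would recall the adjacency in the square of the Kneser graph: for the ground set $X$ of size $2k+r$, two distinct $k$-subsets $A$ and $B$ are adjacent in $K^2(2k+r,k)$ precisely when $A\cap B=\emptyset$ (the two are adjacent in $K(2k+r,k)$ itself) or when $k-r\le |A\cap B|\le k-1$ (they share a common neighbor, using $3k-(2k+r)=k-r$). So to verify that $f$ is a proper coloring of $K^2(2k+r,k)$, it suffices to check that $f(A)\neq f(B)$ whenever $A\neq B$ falls into one of these two intersection patterns.

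Next, I would handle the two cases by invoking the two lemmas already proved. The case $k-r\le |A\cap B|\le k-1$ is exactly the content of Lemma \ref{lem:intersect}, which works over any finite field $F$ and does not require either the characteristic $2$ assumption or the hypothesis on the elementary symmetric polynomials of $X$. The case $A\cap B=\emptyset$ is exactly the content of Lemma \ref{lem:disjoint}; here the characteristic $2$ assumption on $F$ and the vanishing condition $e_{2m+1}(X)=0_F$ for all nonnegative $m$ with $2m+1\le r$ are precisely what allows one to conclude $f(A)\neq f(B)$. Combining these two cases shows that any two distinct $k$-subsets adjacent in $K^2(2k+r,k)$ are sent to different elements of $F^r$ by $f$.

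There is no real obstacle: everything needed has already been set up in Lemmas \ref{lem:intersect} and \ref{lem:disjoint}. The only conceptual point worth making explicit in the write-up is the dichotomy in the definition of adjacency in $K^2(2k+r,k)$, so that the reader sees why exactly these two lemmas cover all relevant pairs $(A,B)$. The proof then concludes by noting that $|{\rm Image}(f)|\le |F^r|=|F|^r$, giving $\chi(K^2(2k+r,k))\le |F|^r$.
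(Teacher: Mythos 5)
Your proposal is correct and follows essentially the same route as the paper: split the edges of $K^2(2k+r,k)$ into the intersecting case (handled by Lemma \ref{lem:intersect}) and the disjoint case (handled by Lemma \ref{lem:disjoint}, which is where the characteristic-$2$ and $e_{2m+1}(X)=0_F$ hypotheses enter), then bound the number of colors by $|F|^r$. No substantive differences from the paper's own argument.
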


\begin{proof}
Let $F$ be a finite field such that $|F|=2^t$, and
$X$ be a subset of $F$ such that $|X|=2k+r$ and $e_{2m+1}(X)=0_F$ for all integers $m$ such that $2m+1\le r$.
We define the Kneser graph $K(2k+r, k)$ on the ground set $X$, and denote  $G = K(2k+r, k)$.
Let $f$ be the function defined in (\ref{main-function}).
Let $AB$ be an edge in $G^2$ for some two vertices $A$, $B$ of $G^2$.
If $A \cap B \neq \emptyset$,  then $k-r\le |A\cap B| < k-1$, and so  $f(A)\neq f(B)$ by Lemma~\ref{lem:intersect}.
If $A \cap B = \emptyset$,  then  $f(A)\neq f(B)$ by Lemma~\ref{lem:disjoint}.
Hence $f$ gives a proper coloring of $G^2$, and $\chi(G^2)$ is at most the size of the range of $f$.
\end{proof}

\bigskip
Now we are ready to prove Theorem \ref{main-result-special}.

\medskip

\noindent{\it Proof of Theorem \ref{main-result-special}.}  The case where $r=1$ is shown in \cite{KP2014}. We suppose that $r\ge 2$.
Let $F$ be a field with $|F|=2^t$.
Since $F$ is the splitting field of the polynomial $x^{2^t}-x$ in $\mathbb{Z}_2[x]$,
$e_{2m+1}(F)$ is the coefficient of  the term $x^{2^t-(2m+1)}$ in the polynomial $x^{2^t}-x$.
Note that if $2m+1\le r$, then $2m+1\le r=2^t- 2k  \le 2^t-3$ for $k \geq 2$.
Thus
\begin{eqnarray}\label{eq_F}
&& e_{2m+1}(F)=0_F \quad \text{for any nonnegative integer }m\text{ such that } 2m+1\le r.
\end{eqnarray}
Now, put $X = F$, then  (a) holds by Theorem~\ref{main-result0}.

On the other hand, from (\ref{eq_F}), we have
\begin{eqnarray*}\label{eq_F-two}
&& e_{2m+1}(F \setminus \{0_F\})=0_F \quad \text{for any nonnegative integer }m\text{ such that } 2m+1\le r.
\end{eqnarray*}
Put $X =  F \setminus \{0_F\}$, then (b) holds by Theorem~\ref{main-result0}.  This completes the proof of Theorem \ref{main-result-special}.
\qed

\bigskip
Furthermore, we have the following corollary.

\begin{corollary} \label{cor:main-result1}
Let $k$ and $r$ be integers such that $k\ge 2$ and $1 \leq r \leq k -2$.
Then the following holds.
\begin{itemize}
\item[(i)] if $2k + r = 2^t -2^{t'}$ for some integers $t\ge t'\ge \log_2(r+2)$,
then $\chi(K^2 (2k+r, k)) \leq (2k+r +2^{t'})^r$.
\item[(ii)] if $2k + r = 2^t -2^{t'}+1$ for some integers $t\ge t'\ge \log_2(r+2)$,
then $\chi(K^2 (2k+r, k)) \leq (2k+r +2^{t'})^r$.
\end{itemize}
\end{corollary}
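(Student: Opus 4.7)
My plan is to reduce both parts of the corollary to Theorem~\ref{main-result0}: in each case I will exhibit a subset $X$ of $F=\mathbb{F}_{2^t}$ with $|X|=2k+r$ satisfying $e_{2m+1}(X)=0_F$ for every $m\ge 0$ with $2m+1\le r$, and the theorem will then deliver
\[
\chi(K^2(2k+r,k))\le |F|^r=(2^t)^r\le(2k+r+2^{t'})^r,
\]
which is exactly the claimed bound. The concrete choices are: for~(i) take $X=F\setminus V$ where $V$ is an $\mathbb{F}_2$-linear subspace of $F$ of dimension $t'$ (such a $V$ exists since the additive group of $F$ is elementary abelian of rank $t\ge t'$); for~(ii) take $X=F\setminus (V\setminus\{0_F\})$. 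A quick count gives $|X|=2^t-2^{t'}=2k+r$ in case~(i) and $|X|=2^t-(2^{t'}-1)=2k+r$ in case~(ii).

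The key structural fact I would use is that for any $\mathbb{F}_2$-subspace $V\le F$ of dimension $t'$, the polynomial $\prod_{a\in V}(x-a)$ is $2$-linearized, i.e.\ of the form $x^{2^{t'}}+\sum_{j=0}^{t'-1}c_j x^{2^j}$. Matching coefficients forces $e_i(V)=0$ unless $i=2^{t'}-2^j$ for some $0\le j\le t'$; among odd indices only $i=2^{t'}-1$ can survive. Dividing that linearized polynomial by $x$ shows the same pattern of vanishing for $V\setminus\{0_F\}$. Under the hypothesis $t'\ge\log_2(r+2)$, equivalently $r\le 2^{t'}-2$, every odd $i\le r$ satisfies $i\le 2^{t'}-3$, so for $W\in\{V,V\setminus\{0_F\}\}$ we have $e_{2m+1-s}(W)=0$ whenever $s$ is even and $2m+1-s$ is a positive odd integer at most $r$.

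From here I would apply Lemma~\ref{lem_basic} to $F=X\sqcup W$ together with the identity $e_i(F)=0$ for $1\le i\le 2^t-2$ (read off from $x^{2^t}-x=\prod_{a\in F}(x-a)$), and induct on $m$. The base case $e_1(X)=e_1(F)-e_1(W)=0$ is immediate. For the inductive step, in
\[
0=e_{2m+1}(F)=\sum_{s=0}^{2m+1}e_s(X)\,e_{2m+1-s}(W),
\]
I would split by the parity of $s$: odd $s<2m+1$ contribute zero by the induction hypothesis, while every even $s$ kills its factor $e_{2m+1-s}(W)$ by the vanishing observation above. Only the term $s=2m+1$ survives, yielding $e_{2m+1}(X)=0$.

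The hard part is less a deep obstruction than careful bookkeeping: one must track exactly which odd indices survive for $W$, and the hypothesis $r\le 2^{t'}-2$ is precisely what lets the parity split succeed. A minor caveat is that Theorem~\ref{main-result0} is stated for $r\ge 2$, so the remaining instance $r=1$ of~(ii)---case~(i) is vacuous for $r=1$, since $2k+1$ is odd while $2^t-2^{t'}$ is even---has to be recovered by appealing to the methods of~\cite{KP2014}, in line with the treatment of Theorem~\ref{main-result-special}.
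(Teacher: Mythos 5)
Your proposal is correct, and it reaches the same destination by a genuinely different (and in one respect stronger) route. Like the paper, you reduce everything to Theorem~\ref{main-result0} by exhibiting $X\subseteq F=\mathbb{F}_{2^t}$ of the right size with vanishing odd elementary symmetric functions, and your parity-split induction deriving $e_{2m+1}(F\setminus W)=0_F$ from $e_{2m+1}(F)=e_{2m+1}(W)=0_F$ is exactly the content of the paper's Claim~\ref{claim:subfield} (with the roles of the subset and its complement swapped). The difference is in the choice of $W$: the paper removes the subfield $F'$ with $|F'|=2^{t'}$ and gets $e_{2m+1}(F')=0_F$ from $\prod_{a\in F'}(x-a)=x^{2^{t'}}-x$, whereas you remove an arbitrary $\mathbb{F}_2$-linear subspace $V$ of dimension $t'$ and get the vanishing from the classical fact that $\prod_{a\in V}(x-a)$ is a $2$-linearized polynomial, so that $e_i(V)\ne 0_F$ forces $i=2^{t'}-2^j$, whose only odd value $2^{t'}-1$ exceeds $r\le 2^{t'}-2$. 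This buys you something real: a subfield of order $2^{t'}$ exists in $\mathbb{F}_{2^t}$ only when $t'\mid t$, so the paper's proof as written covers only that divisibility case, while subspaces of every dimension $t'\le t$ always exist and your argument covers the corollary exactly as stated. Your handling of case (ii) via $V\setminus\{0_F\}$ is also fine (indeed $e_i(V)=e_i(V\setminus\{0_F\})$ since $0_F\in V$), and your observation that $r=1$ needs separate treatment because Theorem~\ref{main-result0} and Lemma~\ref{lem:disjoint} assume $r\ge 2$ is a legitimate caveat that the paper's own proof silently skips.
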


\begin{proof}
Let $F$ be a field with $|F|=2^t$.

\begin{claim}\label{claim:subfield}
Let $X$ and $Y$ be subsets of $F$ such that
$e_{2m+1}(X)=e_{2m+1}(Y)=0_F$ for all integers $m$ satisfying $1 \leq 2m+1\leq r$.
If $X \subset Y$ then for any integer $m$ satisfying $1 \leq 2m+1 \leq r$,
we have $e_{2m+1}(Y \setminus X)=0_F$.
\end{claim}

\begin{proof}
We will prove by the induction on $m$.
By Lemma~\ref{lem_basic}, by the assumption that $e_1(X)=e_1(Y)=0_F$, we have
\[ 0_F=e_{1}(Y) = e_1(X)+e_{1}(Y\setminus X)=e_{1}(Y\setminus X),\]
and so the basis step holds.

Suppose that $e_{2q+1}(Y\setminus X)=0_F$ for all
integers $q$ such that $0 \leq q < m$ for some $m\ge1$.
By Lemma~\ref{lem_basic}, % and by~(\ref{eq_F}),
we have
\begin{eqnarray*}
0_F=e_{2m+1}(Y) &=& \sum_{i=0}^{2m+1} e_{i}(X) e_{2m+1-i}(Y \setminus X)  \\
&=& \sum_{i=0}^{m} e_{2i}(X) e_{2m+1-2i}(Y \setminus X)\\
&=& e_{2m+1}(Y\setminus X).
\end{eqnarray*}
Note that the third equality is from the assumption that $e_{2q+1}(X)=0_F$ for all $2q+1\le r$, and
the last equality is from the induction hypothesis.
This completes the proof of Claim \ref{claim:subfield}.
\end{proof}

Since $F$ is a splitting field of the polynomial $x^{2^t}-x$ in $\mathbb{Z}_2[x]$,
$e_{2m+1}(F)=0_F$ for any nonnegative integer $m$ such that $2m+1\le r$ as it
is the coefficient of  the term $x^{2^t-(2m+1)}$ in the polynomial $x^{2^t}-x$.
Let $t$ and $t'$ be integers with $t\ge t'\ge \log_2(r+2)$.
Let $F'$ be a subfield of $F$ with  $|F'|=2^{t'}$.
Since $F'$ is the splitting field of $x^{2^{t'}}-x$ and $2^{t'}-2 \ge r$, we have
\begin{eqnarray}\label{eq_F'}
&& e_{2m+1}(F')=0_F\quad \text{for any nonnegative integer }m\text{ such that } 2m+1\le r.
\end{eqnarray}
When $2k+r=2^t-2^{t'}$, put $X=F\setminus F'$.  Then  $|X|=2k+r$ and $e_{2m+1}(X)=0_F$ by Claim~\ref{claim:subfield}, (\ref{eq_F}), and (\ref{eq_F'}).  Thus  (i) holds by Theorem~\ref{main-result0}.

When $2k+r=2^t-2^{t'}+1$, put $X=(F\setminus F')\cup\{0_F\}$.  Then $|X|=2k+r$ and
$e_{2m+1}(X)=0_F$ by
Claim~\ref{claim:subfield},  (\ref{eq_F}) and~(\ref{eq_F'}). Thus (ii) holds by Theorem~\ref{main-result0}.
\end{proof}

%%%%%%%%%%%%%%%%%%%%%%%%%%%%%%%%%%%%%%%%%%%%%%%%%%%%%%%%%%%%%%%%%%%%%
\section{Further Discussion}

Kim and Park \cite{KP2014} show that
for integers $k$ and $n$,
if $2k+1=(2^n-1)p+r$ where  $p\ge 1$, $n\ge2$, and  $0\le r\le 2^n-2$,
\begin{equation} \label{upper-K(2k+1,k)}
\chi(K^2(2k+1,k))\leq \frac{2^{n+1}}{2^n-1}k + \frac{2^{n}(2^{n+1}-3)}{2^n-1}.
\end{equation}

From (\ref{upper-K(2k+1,k)}), the following corollary was obtained.

\begin{corollary} [{\cite{KP2014}}, Corollary 2.8] \label{large k}
For any fixed real number $\epsilon > 0$, there exists a positive integer $k_0$ depending on $\epsilon$ such that
\[
\chi(K^2 (2k+1, k)) \leq (2 + \epsilon)(k +  \sqrt{2k+1}),
\]
for any positive integer $k \geq k_0$.  Thus for any fixed real number $\epsilon > 0$,
\[
\limsup_{k \rightarrow \infty} \frac{\chi(K^2 (2k+1, k))}{k} \leq 2 + \epsilon.
\]
\end{corollary}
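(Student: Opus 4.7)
The plan is to extract Corollary~\ref{large k} directly from the refined inequality~(\ref{upper-K(2k+1,k)}) by choosing the parameter $n$ as a function of $\epsilon$ alone (independently of $k$) and then letting $k$ grow. Given $\epsilon > 0$, I first pick an integer $n = n(\epsilon)$ large enough that the leading coefficient satisfies
\[
\frac{2^{n+1}}{2^n - 1} \;=\; 2 + \frac{2}{2^n - 1} \;\leq\; 2 + \epsilon,
\]
which amounts to the explicit requirement $2^n \geq 1 + 2/\epsilon$. With $n$ now fixed, the remaining additive term $C(\epsilon) := \frac{2^n(2^{n+1}-3)}{2^n-1}$ depends only on $\epsilon$.

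Next, for every $k$ with $2k + 1 \geq 2^n - 1$ one can decompose $2k + 1 = (2^n - 1)p + r$ with $p \geq 1$ and $0 \leq r \leq 2^n - 2$, so that (\ref{upper-K(2k+1,k)}) applies and gives
\[
\chi(K^2(2k+1,k)) \;\leq\; (2 + \epsilon)\,k + C(\epsilon).
\]
Since $C(\epsilon)$ is a fixed constant while $(2+\epsilon)\sqrt{2k+1} \to \infty$ as $k \to \infty$, there exists $k_0 = k_0(\epsilon)$, chosen large enough to exceed both $2^{n-1}$ and the threshold above which $C(\epsilon) \leq (2+\epsilon)\sqrt{2k+1}$, such that $\chi(K^2(2k+1,k)) \leq (2+\epsilon)(k + \sqrt{2k+1})$ for all $k \geq k_0$, which is the first assertion of the corollary.

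The asymptotic statement then follows by dividing by $k$: for $k \geq k_0(\epsilon)$,
\[
\frac{\chi(K^2(2k+1,k))}{k} \;\leq\; (2+\epsilon)\left(1 + \frac{\sqrt{2k+1}}{k}\right),
\]
and the right-hand side tends to $2 + \epsilon$ as $k \to \infty$, so $\limsup_{k\to\infty}\chi(K^2(2k+1,k))/k \leq 2 + \epsilon$ as claimed.

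I do not anticipate any serious obstacle here. The entire argument is a straightforward two-step ``choose $n$ large in terms of $\epsilon$, then choose $k$ large in terms of $n$'' strategy, and the extra $\sqrt{2k+1}$ term on the right-hand side of the finite-$k$ bound is essentially cosmetic padding that absorbs the constant $C(\epsilon)$. The only point worth some care is that $n$ must be fixed as a function of $\epsilon$ alone so that $C(\epsilon)$ does not move with $k$; one could alternatively try to optimize $n$ together with $k$ (for instance by taking $2^n \approx \sqrt{2k+1}$) to sharpen the constant in front of $\sqrt{2k+1}$, but such a refinement is unnecessary for the stated limsup conclusion.
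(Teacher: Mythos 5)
Your derivation is correct and follows the same route the paper intends: Corollary~\ref{large k} is presented as a direct consequence of inequality~(\ref{upper-K(2k+1,k)}), and your two-step strategy (fix $n$ from $\epsilon$ so the leading coefficient is at most $2+\epsilon$, then choose $k_0$ large enough that the resulting constant $C(\epsilon)$ is absorbed by $(2+\epsilon)\sqrt{2k+1}$) is a valid way to make that deduction explicit. The only cosmetic difference is that the $\sqrt{2k+1}$ in the stated bound suggests the original argument in \cite{KP2014} took $2^n$ on the order of $\sqrt{2k+1}$, but your fixed-$n$ version reaches the same conclusions without difficulty.
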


From the previous results of $\chi(K^2(2k+1,k))$ in \cite{2009CLW, 2004KN, KP2014},
we have  $\chi(K^2(2k+1,k))\leq \alpha k + \beta$ for some constant real numbers $\alpha$ and $\beta$.  We would like to find the optimal value of $\alpha$ for some constant $\beta$.
%On the other hand,  we may write $\chi(K^2(2k+1,k))\leq \alpha k + g(k)$
%where $\lim_{k \rightarrow \infty} \frac{g(k)}{k} = 0$.
The equality (\ref{upper-K(2k+1,k)}) and Corollary~\ref{large k} imply that the leading coefficient $\alpha$ in $\alpha k + \beta$ can be decreased as  we increase $k$.
It was showed in \cite{KP2014} that  $\chi(K^2(2k+1,k))\leq \frac{8}{3}k + \frac{20}{3}$ for $k \geq 3$.
However, we have understood that  better  upper bounds on $\chi(K^2 (2k+1, k))$ can be obtained
by increasing $k$ slightly.  If $k \geq 7$, we can assume that $n \geq 4$ and $p = 1$ in $2k+1=(2^n-1)p+r$.
Thus we have the following better upper bounds.

\begin{corollary}  \label{better-uppper}
For $k \geq 7$, we have
\[ \chi(K^2(2k+1,k))\leq \frac{32}{15}k + 32. \]
\end{corollary}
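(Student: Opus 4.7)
The plan is to specialize the master inequality (\ref{upper-K(2k+1,k)}) to $n=4$ and check that the hypothesis $p\ge 1$ can be guaranteed as soon as $k\ge 7$. The key arithmetic observation is that the leading coefficient $\frac{2^{n+1}}{2^n-1}$ from (\ref{upper-K(2k+1,k)}) equals $\frac{32}{15}$ precisely when $n=4$, which is exactly the coefficient appearing in the claimed bound. So the natural strategy is to force $n=4$ in the representation $2k+1=(2^n-1)p+r$.

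First, I would verify that for $k\ge 7$ we have $2k+1\ge 15=2^4-1$. Hence we may write $2k+1=15p+r$ with $p\ge 1$ and $0\le r\le 14$, which is the hypothesis of (\ref{upper-K(2k+1,k)}) with $n=4$. Substituting $n=4$ into the bound gives
\[
\chi(K^2(2k+1,k))\ \le\ \frac{2^{5}}{2^{4}-1}\,k+\frac{2^{4}(2^{5}-3)}{2^{4}-1}\ =\ \frac{32}{15}\,k+\frac{16\cdot 29}{15}\ =\ \frac{32}{15}\,k+\frac{464}{15}.
\]

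Finally, I would observe that $\frac{464}{15}<32$ (indeed $\frac{464}{15}\approx 30.93$), so the additive constant can be absorbed into the weaker constant $32$, yielding the stated bound $\chi(K^2(2k+1,k))\le \frac{32}{15}k+32$ for every $k\ge 7$.

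There is no real obstacle here: the work is entirely in identifying the right value of $n$ to plug in, and checking the range of $k$ for which the hypothesis $p\ge 1$ on the representation $2k+1=(2^n-1)p+r$ is available. The lower bound $k\ge 7$ is sharp for this particular choice of $n=4$, since it is exactly what guarantees $2k+1\ge 15$; for smaller $k$ one would be forced to take a smaller $n$, producing a worse leading coefficient, which is the reason the author emphasizes that increasing $k$ slightly improves the bound.
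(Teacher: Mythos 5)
Your proposal is correct and follows essentially the same route as the paper: the paper's (implicit) proof is precisely to take $n=4$ in the bound (\ref{upper-K(2k+1,k)}), which is legitimate once $2k+1\ge 15$, i.e.\ $k\ge 7$, and your computation $\frac{2^4(2^5-3)}{2^4-1}=\frac{464}{15}<32$ correctly absorbs the constant. The only cosmetic difference is that the paper phrases the choice as ``$n\ge 4$ and $p=1$,'' whereas you (more accurately for all $k\ge 7$) only require $p\ge 1$ in the representation $2k+1=15p+r$.
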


\medskip

Now we discuss a similar asymptotic result for $\chi(K^2(2k+r,k))$ where $2\le r \le k-1$.
We start from the following theorem in \cite{prime3}.

\begin{theorem}\label{prime3} {\cite{prime3}}
For any integer $n\ge 3275$, there is a prime $p$ such that $n \le p \le \left( 1+\frac{1}{2 \ln^2 n}\right)n$.
\end{theorem}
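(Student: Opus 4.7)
The statement is an explicit refinement of Bertrand's postulate, asserting that for every integer $n\ge 3275$ the interval $[n,(1+\tfrac{1}{2\ln^2 n})n]$ contains a prime. The plan is to reduce the claim to a positive lower bound on the Chebyshev difference $\theta((1+\delta)n)-\theta(n)$, where $\delta:=\tfrac{1}{2\ln^2 n}$ and $\theta(x):=\sum_{p\le x}\ln p$, and then to combine an explicit form of the prime number theorem with a finite computational verification for the small range of $n$.

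First, I would record an explicit Dusart-type estimate of the shape $|\theta(x)-x|\le C\, x/\ln^3 x$ valid for all $x\ge x_0$, with $x_0$ and $C$ explicit constants (such estimates are standard in the literature on the explicit prime number theorem). Applying this to both endpoints yields
\[
\theta((1+\delta)n)-\theta(n)\;\ge\;\delta n\;-\;\frac{C(1+\delta)n}{\ln^3\bigl((1+\delta)n\bigr)}\;-\;\frac{Cn}{\ln^3 n}\;=\;\frac{n}{\ln^2 n}\left(\frac{1}{2}-\frac{2C+o(1)}{\ln n}\right),
\]
using $\ln((1+\delta)n)=\ln n+O(\delta)$. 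The right-hand side is strictly positive for every $n\ge x_0$ once $\ln n>4C$, and translating $\theta((1+\delta)n)>\theta(n)$ back into primes exhibits the desired prime in the interval.

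Second, to extend the conclusion down to $n=3275$, I would perform a direct computational check for $3275\le n<x_0$: for each such $n$ one locates a prime in $[n,(1+\tfrac{1}{2\ln^2 n})n]$ by consulting a precomputed table. The interval has length at least $n/(2\ln^2 n)$, which is already about $25$ at $n=3275$ and grows with $n$, so such table lookups are routine. The main obstacle is obtaining an explicit bound on $|\theta(x)-x|$ whose constant $C$ and validity threshold $x_0$ are sharp enough that the residual computational range stays tractable; the naive PNT error $O(x/\ln x)$ is insufficient, and one must appeal to the stronger explicit Chebyshev estimates (due to Dusart and others) that decay like $x/\ln^k x$ for $k\ge 3$. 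The specific cutoff $n=3275$ should emerge as the smallest value for which the combined analytic-plus-computational argument succeeds.
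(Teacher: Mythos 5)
The paper does not prove this statement at all: it is quoted verbatim from Dusart's thesis \cite{prime3} and used as a black box, so there is no internal proof to compare against. Your sketch follows the standard (and, in fact, the actual) route to such results---reduce the existence of a prime in $[n,(1+\delta)n]$ to the positivity of $\theta((1+\delta)n)-\theta(n)$, feed in an explicit Chebyshev estimate, and close the remaining finite range by computation---and your algebra (positivity once $\ln n>4C$, interval length about $25$ already at $n=3275$) is sound as far as it goes.

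The gap is that the entire quantitative content is deferred. The estimate $|\theta(x)-x|\le C\,x/\ln^{3}x$ with explicit $C$ and $x_0$ that you ``record'' at the outset is not an off-the-shelf elementary fact; it is itself the main theorem of the very source being cited, and proving it requires an explicit zero-free region for $\zeta$ together with numerical verification of the Riemann Hypothesis up to a large height, fed through the explicit formula. Without supplying concrete values of $C$ and $x_0$ (and checking that the resulting finite range down to $3275$ is actually coverable), your argument is a correct strategy outline rather than a proof, and in particular it cannot by itself certify the specific cutoff $n\ge 3275$ appearing in the statement. Since the paper treats this as an imported result, the honest conclusion is that your proposal reconstructs the method behind the citation but does not constitute an independent verification of it.
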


\begin{theorem}\label{injective2}
For any positive real number $\epsilon$,
\[\limsup_{k \rightarrow \infty} \frac{\chi_i(K(2k+r,k))}{k^r} \le  2^r+\epsilon.\]
\end{theorem}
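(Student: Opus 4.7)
\textbf{Proof plan for Theorem \ref{injective2}.} Fix $r\ge 2$ and let $\epsilon>0$ be given. The strategy is to replay the argument of Theorem \ref{injective}, but using the much sharper prime bound of Theorem \ref{prime3} in place of Theorem \ref{prime-2n}. Concretely, for every $k$ large enough that $2k+r\ge 3275$, Theorem \ref{prime3} supplies a prime $p$ with
\[
2k+r \;\le\; p \;\le\; \left(1+\frac{1}{2\ln^2(2k+r)}\right)(2k+r).
\]
Taking $F=\mathbb{Z}_p$ and $X\subset F$ with $|X|=2k+r$, Lemma \ref{lem:intersect} shows that the map $f(A)=(e_1(A),\ldots,e_r(A))$ is an injective coloring of $K(2k+r,k)$ using at most $|F|^r=p^r$ colors.

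From here the conclusion is a routine asymptotic estimate. First I would write
\[
\frac{\chi_i(K(2k+r,k))}{k^r} \;\le\; \frac{p^r}{k^r} \;\le\; \left(2+\frac{r}{k}\right)^r\left(1+\frac{1}{2\ln^2(2k+r)}\right)^r.
\]
Since $r$ is fixed while $k\to\infty$, both $r/k$ and $1/(2\ln^2(2k+r))$ tend to $0$, so the right-hand side converges to $2^r$. Therefore there exists $k_0=k_0(r,\epsilon)$ such that for every $k\ge k_0$,
\[
\frac{\chi_i(K(2k+r,k))}{k^r} \;\le\; 2^r+\epsilon,
\]
which gives $\limsup_{k\to\infty}\chi_i(K(2k+r,k))/k^r\le 2^r+\epsilon$, as desired.

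There is essentially no serious obstacle here: the heavy lifting (the injective coloring construction and its correctness) is already done in Lemma \ref{lem:intersect}, and the only new ingredient is the substitution of a tighter Bertrand-type bound. If anything, the one point that deserves care is handling the range $2\le r\le k-1$ so that Lemma \ref{lem:intersect} applies uniformly as $k$ grows; since $r$ is fixed and $k\to\infty$, the hypothesis $1\le r\le k-2$ of Lemma \ref{lem:intersect} is satisfied for all sufficiently large $k$, so no additional work is needed there.
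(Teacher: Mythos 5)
Your proposal is correct and follows essentially the same route as the paper: both invoke Theorem \ref{prime3} to obtain a prime $p$ with $2k+r\le p\le\bigl(1+\frac{1}{2\ln^2(2k+r)}\bigr)(2k+r)$, apply the injective coloring of Lemma \ref{lem:intersect} over $\mathbb{Z}_p$ to get $\chi_i(K(2k+r,k))\le p^r$, and then pass to the limit. The only difference is cosmetic bookkeeping: the paper fixes $\delta=\frac{(2^r+\epsilon)^{1/r}-2}{2}$ in advance, whereas you bound $p^r/k^r$ directly and observe its limit is $2^r$; both are fine.
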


\begin{proof}
For any fixed real number $\epsilon > 0$,
let \[\delta=\frac{(2^{r}+\epsilon)^{1/r}-2}{2}.\]
Then clearly, $\delta>0$, and so
there exists an integer $n_0$ such that $\frac{1}{2 (\ln n_0)^2}  < \delta$.
Let $n_0$ be a fixed integer with $n_0\ge 3275$ and satisfying $\frac{1}{2 (\ln n_0)^2}  < \delta$.  Then for any integer $n$ with $n\ge n_0$, we have
$\frac{1}{2 (\ln n)^2} <\delta$.
By Theorem~\ref{prime3}, for any integer $n$ with $n\ge n_0$,
there exists a prime $p_n$ such that
\begin{equation} \label{prime}
n\le p_n \le \left( 1+\frac{1}{2 (\ln n)^2}\right)n < n+\delta n .
\end{equation}
Let $k$ be a sufficiently large integer such that $2k+r \ge n_0$.
Then for $n = 2k+r$, there exists a prime $p_{2k+r}$ such that $p_{2k+r} <(2k+r)+ \delta(2k+r)$ from (\ref{prime}). Let $F$ be a finite field with
$|F|=p_{2k+r}$.
Let $G$ be the Kneser graph $K(2k+r, k)$ on the ground set $X$ where $X$ is a subset of  $F$.
Then
\[\chi_i(G) \le |F|^r=p_{2k+r}^r <  \big((2k+r)+ \delta(2k+1) \big)^r= \big(  (2+2\delta)k + r(1+\delta) \big)^r.\]
Since
\[2+2\delta=2+(2^{r}+\epsilon)^{1/r}-2=(2^{r}+\epsilon)^{1/r},\]
we have
\[\limsup_{k \rightarrow \infty} \frac{\chi_i(K(2k+r,k))}{k^r} \le 2^r+\epsilon.\]
\end{proof}

%Kim and Park \cite{KP2014} showed the following asymptotic result.  For any real number $\epsilon > 0$, we have
%\[\limsup_{k \rightarrow \infty} \frac{\chi(K^2(2k+1,k))}{k} \le  2 +\epsilon.\]
%With $\epsilon = \frac{1}{2} \epsilon_0$ in

From the fact that $\chi(G^2) \leq 2 \chi_i (G)$ for any graph $G$, by the same argument in
Theorem \ref{injective2},
%by using the fact that $\chi(G^2) \leq 2 \chi_i (G)$ for any graph $G$ and by choosing $\frac{1}{2}\epsilon$ %instead of $\epsilon$,
we have the following corollary.

\begin{corollary}\label{infinity-r}
For any real number $\epsilon > 0$, and for any integer $r \geq 2$, we have
\[\limsup_{k \rightarrow \infty} \frac{\chi(K^2(2k+r,k))}{k^r} \le  2 \cdot 2^{r}+\epsilon.\]
\end{corollary}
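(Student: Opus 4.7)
The plan is to derive this corollary as an immediate consequence of Theorem~\ref{injective2} combined with the inequality $\chi(G^2) \le 2\chi_i(G)$ due to Kim and Oum~\cite{KO2009}, which has already been invoked in the proof of Theorem~\ref{main-result-general}. There is essentially no new combinatorial content to introduce; the work is purely a limit computation.

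Concretely, I would proceed as follows. First, fix $\epsilon > 0$ and $r \ge 2$. Set $\epsilon' = \epsilon/2 > 0$. Applying Theorem~\ref{injective2} with this $\epsilon'$ yields
\[
\limsup_{k\to\infty}\frac{\chi_i(K(2k+r,k))}{k^r}\le 2^r+\frac{\epsilon}{2}.
\]
Next, invoke $\chi(G^2) \le 2\chi_i(G)$ with $G = K(2k+r,k)$, which gives
\[
\frac{\chi(K^2(2k+r,k))}{k^r}\le \frac{2\,\chi_i(K(2k+r,k))}{k^r}
\]
for every $k$. Taking $\limsup$ on both sides and using the fact that the limsup of a scalar multiple is the scalar multiple of the limsup, I obtain
\[
\limsup_{k\to\infty}\frac{\chi(K^2(2k+r,k))}{k^r}\le 2\left(2^r+\frac{\epsilon}{2}\right)=2\cdot 2^r+\epsilon,
\]
which is the claimed bound.

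Alternatively, one could mimic the argument inside Theorem~\ref{injective2} directly: choose $\delta>0$ small enough that $(2+2\delta)^r \le 2^r+\epsilon/2$, invoke Theorem~\ref{prime3} to produce, for all large $k$, a prime $p_{2k+r}$ satisfying $2k+r \le p_{2k+r} < (1+\delta)(2k+r)$, and then apply Theorem~\ref{injective} (or rather its proof) over a finite field $F$ of size $p_{2k+r}$ together with the factor of $2$ coming from $\chi(G^2)\le 2\chi_i(G)$. This reproduces the same bound without formally separating the two results. Either route works, but the first is cleaner since Theorem~\ref{injective2} has already packaged the prime-gap estimate into an asymptotic statement.

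There is no real obstacle here; the only point requiring a sliver of care is the $\epsilon$-bookkeeping, namely halving $\epsilon$ before applying Theorem~\ref{injective2} so that the extra factor of $2$ from $\chi(G^2)\le 2\chi_i(G)$ still leaves us with the additive $\epsilon$ demanded in the statement.
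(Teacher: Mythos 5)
Your proposal is correct and matches the paper's approach: the paper likewise derives the corollary from $\chi(G^2)\le 2\chi_i(G)$ together with the argument of Theorem~\ref{injective2}. Your explicit $\epsilon/2$ bookkeeping is a slightly more careful packaging of what the paper leaves implicit, but it is not a different route.
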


\begin{remark} \rm
Note that the result $\chi(K^2 (2k+r, k)) \leq  (r+2)(3k + \frac{3r+3}{2})^r$ in \cite{KP2014} implies that
\[ \limsup_{k \rightarrow \infty} \frac{\chi(K^2(2k+r,k))}{k^r} \leq (r+2) \cdot 3^r + \epsilon. \]
Hence Corollary \ref{infinity-r} implies that the leading term of the upper bound on $\limsup_{k \rightarrow \infty} \frac{\chi(K^2(2k+r,k))}{k^r}$ is reduced  from $(r+2) \cdot 3^r$ to $2 \cdot 2^r$.
\end{remark}

%%%%%%%%%%%%%%%%%%%%%%%%%%%%%%%%%%%%%%%%%%%%%%%%%%%%%%%%%%%%%%%%%%%%
%\section{New interpretation of the results in \cite{KP2014}}

%\noindent {\bf Acknowledgement.}

%%%%%%%%%%%%%%%%%%%%%%%%%%%%%%%%%%%%%%%%%%%%%%%%%%%%%%%%%%%%%%%%%%%


\begin{thebibliography}{00}
\bibitem{Borodin2013}
Borodin, O.V.: Colorings of plane graphs: A survey, {\it Discrete Math.} {\bf 313} (2013), 517–-539.

\bibitem{2009CLW}
Chen, J.-Y.,  Lih, K.-W., Wu, J.:
Coloring the square of the Kneser graph $KG(2k+1,k)$ and the Schrijver
graph $SG(2k+2,2)$,
{\it Discrete Appl. Math.} {\bf 157} (2009), 170--176.


\bibitem{De_Er_PF}  Deza, M., Erd\H{o}s, P., Frankl, P.:
 Intersection properties of the systems of finite sets,
{\it Proc. London Math. Soc.} {\bf 36} (1978), 369--384.



\bibitem{prime2} Paz, G.A.: On the interval [n; 2n]: Primes, composites and perfect powers,
{\it Gen. Math. Notes} {\bf 15} (2013), 1--15.


\bibitem{prime3} Dusart, P.: Autour de la fonction qui compte le nombre de nombres pre-
miers, Ph.D. Thesis, Universit$\acute{\text{e}}$ de Limoges, (1998).


\bibitem{2004KN}
Kim, S.-J.,  Nakprasit, K.:
On the chromatic number of the square of the Kneser graph $K(2k+1,k)$,
{\it  Graphs Combin.} {\bf 20} (2004), 79--90.


\bibitem{KO2009}
Kim, S.-J., Oum, S.: Injective chromatic number and chromatic number of the square of graphs, manuscript, 2009.

\bibitem{KP2014}
Kim, S.-J., Park, B.:
Improved bounds on the chromatic numbers of the square of Kneser graphs,
{\it Discrete Math.} \textbf{315} (2014), 69–-74.

\bibitem{1978L}
Lov\'{a}sz, L.:
Kneser's conjecture, chromatic number and homotopy,
{\it J. Comb. Theory, Ser. A} {\bf 25} (1978), 319--324.

\bibitem{Newton2003}
Min\'{a}\v{c}, J.:
Newton's identities once again!,
{\it Amer. Math. Monthly} {\bf 110} (2003),  232--234.

\end{thebibliography}
\end{document}